\newtheorem{theorem}{Theorem}
\newtheorem{lemma}[theorem]{Lemma}
\newtheorem{proposition}[theorem]{Proposition}
\newtheorem{corollary}[theorem]{Corollary}
\newtheorem{conjecture}[theorem]{Conjecture}
\theoremstyle{definition}
\newtheorem{definition}[theorem]{Definition}
\newtheorem{example}[theorem]{Example}
\begin{document}

\title[Algebraic properties of codimension series]
{Algebraic properties of codimension series\\
of PI-algebras}

\author[S. Boumova, V. Drensky]
{Silvia Boumova, Vesselin Drensky}
\address{Silvia Boumova: Higher School of Civil Engineering ``Lyuben Karavelov'',
175 Suhodolska Str., 1373 Sofia, Bulgaria, and
Institute of Mathematics and Informatics,
Bulgarian Academy of Sciences,
1113 Sofia, Bulgaria}
\email{silvi@math.bas.bg}
\address{Vesselin Drensky: Institute of Mathematics and Informatics,
Bulgarian Academy of Sciences,
1113 Sofia, Bulgaria}
\email{drensky@math.bas.bg}

\subjclass[2010]
{05A15; 05E10; 16R10; 20C30.}
\keywords{Rational generating functions, exponential generating functions, Littlewood-Richardson rule,
algebras with polynomial identity, codimension sequence.}

\begin{abstract}
Let $c_n(R)$, $n=0,1,2,\ldots$, be the codimension sequence of the PI-algebra $R$ over a field of characteristic 0
with T-ideal $T(R)$ and let $c(R,t)=c_0(R)+c_1(R)t+c_2(R)t^2+\cdots$ be the
codimension series of $R$ (i.e., the generating function of the codimension sequence of $R$).
Let $R_1,R_2$ and $R$ be PI-algebras such that $T(R)=T(R_1)T(R_2)$.
We show that if $c(R_1,t)$ and $c(R_2,t)$ are rational functions,
then $c(R,t)$ is also rational. If $c(R_1,t)$ is rational and $c(R_2,t)$ is algebraic,
then $c(R,t)$ is also algebraic.
The proof is based on the fact that the product of two exponential generating functions behaves as the exponential generating function of
the sequence of the degrees of the outer tensor products of two sequences of representations of the symmetric groups $S_n$.
\end{abstract}

\maketitle

\section*{Introduction}

With every infinite sequence of complex numbers  $a_0,a_1,a_2,\ldots$ or, shortly, $\{a_n\}_0^{\infty}$,
or even $\{a_n\}$,
we associate two formal power series: {\it the ordinary generating function}
\[
a(t)=\sum_{n\geq 0}a_nt^n
\]
and {\it the exponential generating function}
\[
\widetilde{a}(t)=\sum_{n\geq 0}a_n\frac{t^n}{n!}.
\]
In this way we have three $\mathbb C$-vector spaces:
\begin{enumerate}
\item[$\bullet$] The space $\mathcal S$ of all sequences $\{a_n\}$;
\item[$\bullet$] The space $\mathcal G$ of all generating functions $a(t)$;
\item[$\bullet$] The space $\mathcal E$ of all exponential generating functions $\widetilde{a}(t)$.
\end{enumerate}
Clearly, $\mathcal G$ and $\mathcal E$ coincide with the vector space ${\mathbb C}[[t]]$ of formal power series.

Let $R$ be a unital $K$-algebra with polynomial identity (or a PI-algebra), where $K$ is a fixed field of characteristic 0.
Let $T(R)\subset K\langle X\rangle$ be the T-ideal of $R$, where
$K\langle X\rangle=K\langle x_1,x_2,\ldots\rangle$ is the free unital algebra of countable rank over $K$.
Maybe the most important object in the quantitative study of PI-algebras is the codimension sequence $\{c_n(R)\}$, where
\[
c_n(R)=\dim(P_n/(P_n\cap T(R))),\quad n=0,1,2,\ldots.
\]
Here $P_n$ is the vector subspace of $K\langle X\rangle$ spanned by all multilinear monomials
$x_{\sigma(1)}\cdots x_{\sigma(n)}$ of degree $n$ ($\sigma$ is in the symmetric group $S_n$).
See the book by Giambruno and Zaicev \cite{GZ} for a background on PI-algebras and the properties of their codimensions.
By the classical theorem of Regev \cite{R0} the codimension sequence is exponentially bounded. Hence the radius of convergency
of {\it the codimension series of} $R$ (i.e., of the generating function of the codimension sequence $\{c_n(R)\}$)
\[
c(R,t)=\sum_{n\geq 0}c_n(R)t^n
\]
is
\[
r(R)=\frac{1}{\limsup_{n\to\infty}\sqrt[n]{c_n(R)}}
\]
and gives information for the growth of the codimensions of $R$. Later Giambruno and Zaicev
(See their book \cite{GZ} for an account) showed that the limit
\[
\exp(R)=\lim_{n\to\infty}\sqrt[n]{c_n(R)}
\]
exists and is an integer called {\it the exponent of} $R$. They also described the minimal with respect to the exponent
varieties of algebras (equivalently, the maximal T-ideals $T(R)$) of a given exponent. In this way they confirmed
a conjecture of Drensky \cite{D1} that the maximal with respect to the exponent T-ideals
are the products of the T-prime ideals introduced by Kemer in his structure theory of T-ideals, see his book \cite{K}.
Recently Berele and Regev \cite{BR2} for finitely generated algebras and Berele \cite{B}
in the general case confirmed (for unital algebras $R$) the conjecture of Regev
that for suitable $a\in{\mathbb R}$,$k\in{\mathbb Z}$, $b\in {\mathbb N}$
\[
c_n(R)\simeq an^{k/2}b^n,\quad n=0,1,2,\ldots.
\]

Regev, see his survey article \cite{R1}, determined for any $k>1$ the asymptotic behaviour of the codimension sequence
of the algebra $M_k(K)$ of $k\times k$ matrices with entries from $K$ and showed that
\[
c_n(M_k(K))\approx (2\pi)^{(1-k)/2}2^{(1-k^2)/2}
1!2!\ldots (k-1)!k^{(k^2+4)/2}n^{(1-k^2)/2}k^{2n+2}.
\]
Combining this with an old result of Jungen \cite{J} on the asymptotic behaviour of the coefficients of
algebraic power series, Beckner and Regev \cite{BcR} showed that for $n>1$ odd the codimension series $c(M_k(K),t)$ is not algebraic
over the field of rational functions ${\mathbb C}(t)$.
For $k=2$ the series $c(M_2(K),t)$ is algebraic and the conjecture is that it is not algebraic
for all even $k\geq 4$.

The starting point of our project was the following question of Regev (private communication):
{\it For a given PI-algebra $R$, is the codimension series $c(R,t)$ algebraic?}
The exact values of the codimension sequences are known for very few algebras, among them
the field $K$, the Grassmann algebra $E$, the algebra $M_2(K)$ and the tensor square $E\times_KE$ of the Grassmann algebra,
the algebras of $k\times k$ upper triangular matrices $U_k(K)$ and $U_k(E)$ with entries from the field and from the Grassmann algebra,
respectively. The rationality of of the codimension series holds for $K$, $E$, $U_k(K)$, $U_k(E)$
and algebras with polynomial growth of the codimension sequence. It is also known that the codimension series is algebraic for
$M_2(K)$ and $E\otimes E$.

In the present paper we study the rationality and algebraicity of the codimension series of products of T-ideals. We prove that
{\it if $R_1,R_2$ and $R$ are PI-algebras such that $T(R)=T(R_1)T(R_2)$ and $c(R_1,t), c(R_2,t)$ are rational, then $c(R,t)$ is also
rational. If $c(R_1,t)$ is rational and $c(R_2,t)$ is algebraic, then $c(R,t)$ is also algebraic.}
Formanek \cite{F} expressed the Hilbert series of the product of two T-ideals
in terms of the Hilbert series of the factors.
Berele and Regev \cite{BR1} translated this result in the language of cocharacters.
If $\{\chi_n(R_1)\}$ and $\{\chi_n(R_2)\}$ are, respectively, the cocharacter sequences
of the algebras $R_1$ and $R_2$, then
the cocharacter sequence of the T-ideal $T(R)=T(R_1)T(R_2)$ is
\[
\chi_n(R)=\chi_n(R_1)+\chi_n(R_2)+\chi_{(1)}\widehat\otimes \sum_{j=0}^{n-1}\chi_j(R_1)\widehat\otimes\chi_{n-j-1}(R_2)
-\sum_{j=0}^n\chi_j(R_1)\widehat\otimes\chi_{n-j}(R_2),
\]
$n=0,1,2,\ldots$, where $\widehat\otimes$ denotes the ``outer'' tensor product of characters of symmetric groups.
For irreducible characters it corresponds
to the Littlewood-Richardson rule for products of Schur functions:
\[
\chi_{\lambda}\widehat\otimes\chi_{\mu}=\sum_{\nu\vdash\vert\lambda\vert+\vert\mu\vert}c_{\lambda\mu}^{\nu}\chi_{\nu},
\]
where
\[
S_{\lambda}(t_1,\ldots,t_d)S_{\mu}(t_1,\ldots,t_d)
=\sum_{\nu\vdash\vert\lambda\vert+\vert\mu\vert}c_{\lambda\mu}^{\nu}S_{\nu}(t_1,\ldots,t_d),
\]
\[
\lambda=(\lambda_1,\ldots,\lambda_p),\quad \mu=(\mu_1,\ldots,\mu_q),\quad d\geq p+q,
\]
see e.g. the book by Macdonald \cite{M} for the rule. There is a simple formula for the exponential
codimension series of the product of two T-ideals $T(R)=T(R_1)T(R_2)$, see Drensky \cite{D1} and Petrogradsky \cite{P2}:
\[
\widetilde{c}(R,t)=\widetilde{c}(R_1,t)+\widetilde{c}(R_2,t)+(e^t-1)\widetilde{c}(R_1,t)\widetilde{c}(R_2,t).
\]
Exponential codimension series appeared also under the name {\it complexity functions}
in the work of Razmyslov, see his book \cite{Ra}, and Petrogradsky \cite{P1}
in their study of the codimension growth of the polynomial identities of Lie algebras.

In the present paper we know the behaviour of the product of two exponential generating functions and want to derive
algebraic properties of the corresponding ordinary generating function. Our main results are consequences of the following
more general ones.
{\it Let $\{a_n\}$ and $\{b_n\}$ be two sequences and let $\{c_n\}$ be the sequence
determined by the property $\widetilde{c}(t)=\widetilde{a}(t)\widetilde{b}(t)$. If the generating functions
$a(t)$ and $b(t)$ are rational, then $c(t)$ is also rational. If $a(t)$ is rational and $b(t)$ is algebraic, then $c(t)$ is also
algebraic.} The proof of the first fact is an easy consequence of well known properties of generating functions.
The proof of the second fact is more complicated.

\section{Properties of generating functions}

For the basic properties of generating functions see the books by Wilf \cite{Wi} or Lando \cite{L}.
Studying the codimension series of a product of T-ideals,
we may formalize the problem in the following way.
The vector spaces $\mathcal S$, $\mathcal G$ and $\mathcal E$ have natural structures of $\mathbb C$-algebras:
\begin{itemize}
\item[$\bullet$] The componentwise multiplication of sequences in $\mathcal S$:
\[
\{a_n\}\ast_{\mathcal S}\{b_n\}=\{a_nb_n\};
\]
\item[$\bullet$] The usual multiplication (using the Cauchy product rule) of formal power series in $\mathcal G$ and $\mathcal E$:
\[
\left(\sum_{n\geq 0}a_nt^n\right)\ast_{\mathcal G}\left(\sum_{n\geq 0}b_nt^n\right)
=\left(\sum_{n\geq 0}a_nt^n\right)\left(\sum_{n\geq 0}b_nt^n\right)=\sum_{n\geq 0}\left(\sum_{k=0}^na_kb_{n-k}\right)t^n,
\]
\[
\left(\sum_{n\geq 0}a_n\frac{t^n}{n!}\right)\ast_{\mathcal G}\left(\sum_{n\geq 0}b_n\frac{t^n}{n!}\right)
=\left(\sum_{n\geq 0}a_n\frac{t^n}{n!}\right)\left(\sum_{n\geq 0}b_n\frac{t^n}{n!}\right)
\]
\[
=\sum_{n\geq 0}\left(\sum_{k=0}^n\binom{n}{k}a_kb_{n-k}\right)\frac{t^n}{n!}.
\]
\end{itemize}
The maps $\gamma:{\mathcal S}\to {\mathcal G}$ and $\varepsilon:{\mathcal S}\to {\mathcal E}$ defined by
\[
\gamma:\{a_n\}\to a(t)=\sum_{n\geq 0}a_nt^n,
\quad \varepsilon:\{a_n\}\to \widetilde{a}(t)=\sum_{n\geq 0}a_n\frac{t^n}{n!}
\]
are isomorphisms of vector spaces but not isomorphisms of algebras.
{\it If $\{a_n\}$ and $\{b_n\}$ satisfy some property, what can one say
about the properties of $\gamma(\{a_n\}\ast_{\mathcal S}\{b_n\})$ and
$\varepsilon(\{a_n\}\ast_{\mathcal S}\{b_n\})$? One may ask similar questions for
the properties of $\gamma^{-1}(a(t)\ast_{\mathcal S}b(t))$, $\varepsilon\circ\gamma^{-1}(a(t)\ast_{\mathcal S}b(t))$
and $\varepsilon^{-1}(\widetilde{a}(t)\ast_{\mathcal E}\widetilde{b}(t))$,
$\gamma\circ\varepsilon^{-1}(\widetilde{a}(t)\ast_{\mathcal E}\widetilde{b}(t))$.}

The following well known lemma gives the description of sequences with rational generating functions.

\begin{lemma}\label{rational generating function}
The following conditions for the sequence $\{a_n\}$ are equivalent:
\begin{itemize}
\item[(i)] The sequence $\{a_n\}$ satisfies a linear recurrence relation with constant coefficients $c_1,\dots,c_k$,
i.e., the elements  $a_0, a_1, \dots, a_{k-1}$ are arbitrary and
\[
a_{n+k} = c_1 a_{n+k-1} + c_2 a_{n+k-2} + \cdots + c_k a_n,\quad n=0,1,2,\ldots;
\]
\item[(ii)] There exist constants $\alpha_1,\ldots,\alpha_i$, polynomials $p_1(n),\ldots,p_i(n)$
and a nonnegative integer $n_0$ such that
for $n\geq n_0$ the elements $a_n$ are of the form
\[
a_n=p_1(n)\alpha_1^n+\cdots+p_i(n)\alpha_i^n;
\]
\item[(iii)] The generating function $a(t)$ is a rational function;
\item[(iv)] The exponential generating function $\widetilde{a}(t)$ satisfies a linear homogeneous differential
equation with constant coefficients;
\item[(v)] There exist constants $\beta_1,\ldots,\beta_i$ and polynomials $f_1(t),\ldots,f_i(t)$
such that the exponential generating function $\widetilde{a}(t)$ is of the form
\[
\widetilde{a}(t)=f_1(t)e^{\beta_1t}+\cdots+f_i(t)e^{\beta_it}.
\]
\end{itemize}
\end{lemma}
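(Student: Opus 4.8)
The plan is to prove the five conditions equivalent by linking them in a chain. On the ordinary-generating-function side I would establish $\mathrm{(i)}\Leftrightarrow\mathrm{(iii)}\Leftrightarrow\mathrm{(ii)}$; on the exponential side the equivalence $\mathrm{(iv)}\Leftrightarrow\mathrm{(v)}$ is classical constant-coefficient linear-ODE theory; and the two halves are joined by a single bridge $\mathrm{(ii)}\Leftrightarrow\mathrm{(v)}$. Since $\{a_n\}$, $a(t)$ and $\widetilde a(t)$ determine one another under the bijections $\gamma$ and $\varepsilon$, it suffices to verify these implications at the level of coefficients.

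For $\mathrm{(i)}\Leftrightarrow\mathrm{(iii)}$ I would introduce the denominator $q(t)=1-c_1t-\cdots-c_kt^k$ and inspect the product $q(t)a(t)$: for every $n\geq k$ the coefficient of $t^n$ equals $a_n-c_1a_{n-1}-\cdots-c_ka_{n-k}$, which vanishes precisely when the recurrence in (i) holds. Thus the recurrence is equivalent to $q(t)a(t)$ being a polynomial of degree $<k$, i.e.\ to $a(t)=p(t)/q(t)$ with $q(0)=1$. For $\mathrm{(iii)}\Leftrightarrow\mathrm{(ii)}$ I would decompose a proper rational function into partial fractions; since the coefficient of $t^n$ in $(1-\alpha t)^{-m}$ is $\binom{n+m-1}{m-1}\alpha^n$ and this binomial coefficient is a polynomial in $n$, summing the simple fractions yields $a_n=\sum_j p_j(n)\alpha_j^n$ valid for $n\geq n_0$, the threshold $n_0$ absorbing the polynomial part of the decomposition. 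Conversely each $n^s\alpha^n$ has a rational generating function and correcting finitely many initial terms only adds a polynomial, so (ii) returns (iii).

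The bridge rests on the operator $\theta=t\,\frac{d}{dt}$, which acts on $t^n$ as multiplication by $n$. From $\sum_n\alpha^n t^n/n!=e^{\alpha t}$ one gets $\sum_n n^s\alpha^n t^n/n!=\theta^s e^{\alpha t}$, and an easy induction (using $\theta(g\,e^{\alpha t})=t(g'+\alpha g)e^{\alpha t}$) shows $\theta^s e^{\alpha t}=g_s(t)e^{\alpha t}$ for a polynomial $g_s$. Hence the exponential generating function of $p(n)\alpha^n$ is $f(t)e^{\alpha t}$ with $f$ a polynomial, which gives $\mathrm{(ii)}\Rightarrow\mathrm{(v)}$; the finitely many terms below $n_0$ contribute only a polynomial, which is absorbed into the $\beta=0$ summand. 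For the converse I would extract coefficients: in $t^me^{\beta t}$ the coefficient of $t^n/n!$ is $n(n-1)\cdots(n-m+1)\beta^{n-m}$, a polynomial in $n$ times $\beta^n$, so each summand $f_j(t)e^{\beta_j t}$ produces a contribution $q_j(n)\beta_j^n$ and (v) yields (ii). Finally $\mathrm{(iv)}\Leftrightarrow\mathrm{(v)}$ records that the solution space of a constant-coefficient linear homogeneous ODE is spanned by the functions $t^s e^{\beta t}$, where $\beta$ runs over the roots of the characteristic polynomial and $s$ is below their multiplicities.

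I expect no deep obstacle, since this is a classical characterization; the only genuine care is bookkeeping. One must track the threshold $n_0$ and the finitely many exceptional initial terms throughout, ensuring the polynomial corrections they generate are absorbed correctly (into the $\beta=0$ exponential on the one side, into the polynomial numerator on the other). A secondary point is to merge coinciding bases $\alpha_j=\beta_j$ so that the exponents in (ii) and (v) line up, and to confirm that the degrees of the polynomials $p_j$ and $f_j$ match the multiplicities arising in the partial-fraction and ODE descriptions.
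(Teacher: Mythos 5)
Your proof is correct, and it is the standard argument. Note that the paper itself offers no proof of this lemma at all: it is introduced with the words ``the following well known lemma'' and implicitly deferred to the cited texts of Wilf and Lando, whose treatments proceed exactly along the lines you lay out (denominator polynomial $q(t)=1-c_1t-\cdots-c_kt^k$ for $\mathrm{(i)}\Leftrightarrow\mathrm{(iii)}$, partial fractions for $\mathrm{(iii)}\Leftrightarrow\mathrm{(ii)}$, the exponential-polynomial description of ODE solutions for $\mathrm{(iv)}\Leftrightarrow\mathrm{(v)}$, and coefficient extraction for the bridge). So your write-up supplies precisely the proof the paper takes for granted. Two small points of bookkeeping you should make explicit if you flesh this out: in the direction $\mathrm{(iii)}\Rightarrow\mathrm{(i)}$, if $a(t)=p(t)/q(t)$ with $\deg p\geq \deg q$ you must enlarge $k$ to $\max(\deg q,\,\deg p+1)$ and pad the recurrence with zero coefficients $c_j$, so that $q(t)a(t)$ being a polynomial of degree $<k$ really matches the recurrence of length $k$; and in the direction $\mathrm{(v)}\Rightarrow\mathrm{(ii)}$ the summand with $\beta_j=0$ needs separate (trivial) handling, since $t^m e^{0\cdot t}=t^m$ contributes only finitely many nonzero coefficients, which are absorbed by the threshold $n_0$ rather than by a term $q_j(n)\beta_j^n$.
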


The equivalence between (ii) and (iii) immediately implies the following property of the Hadamard product
\[
a(t)\ast_Hb(t)=\gamma(\{a_n\}\ast_{\mathcal S}\{b_n\})
\]
of $a(t)$ and $b(t)$:
If $a(t)$ and $b(t)$ are rational, then $a(t)\ast_Hb(t)$
is also rational. A result of Jungen \cite{J} gives that if $a(t)$ is rational and $b(t)$ is algebraic, then
$a(t)\ast_Hb(t)$ is also algebraic.

There is also another point of view to the cocharacter sequence of the product of two T-ideals
which comes from the formula of Berele and Regev \cite{BR1} for the cocharacters of $R$ with $T(R)=T(R_1)T(R_2)$.
Let $\xi=\{\xi_n\}$ and $\eta=\{\eta_n\}$
be two sequences of $S_n$-characters with degrees $d_n(\xi)=d(\xi_n)$ and $d_n(\eta)=d(\eta_n)$, $n=0,1,2,\ldots$, respectively.
We embed $S_k$ and $S_{n-k}$ into $S_n$ assuming that $S_k$ and $S_{n-k}$ act, respectively, on $\{1,\ldots,k\}$ and
$\{k+1,\ldots,n\}$. Then we consider the $S_k\times S_{n-k}$ character $\xi_k\otimes \eta_{n-k}$. The outer product
$\xi_k\widehat{\otimes} \eta_{n-k}$ is defined as the induced $S_n$-character of $\xi_k\otimes \eta_{n-k}$ and its degree
is
\[
d(\xi_k\widehat{\otimes} \eta_{n-k})=\binom{n}{k}d_k(\xi)d_{n-k}(\eta).
\]
Hence,
\[
d(\xi\widehat{\otimes}\eta)=\left\{d_n(\xi\widehat{\otimes}\eta)=\sum_{k=0}^n\binom{n}{k}d_k(\xi)d_{n-k}(\eta)\right\}.
\]
In the special case when $\xi=\{\xi_n=\chi_{(n)}\}$ is the sequence of trivial $S_n$-characters, the outer product
$\xi\widehat{\otimes}\eta$ can be calculated by the Young rule. Regev \cite{R2} called the resulting sequence
$\xi\widehat{\otimes}\eta$ {\it Young derived}. In the general case, since $\xi\widehat{\otimes}\eta$ can be computed
by the Littlewood-Richardson rule, we call the sequence $d(\xi\widehat{\otimes}\eta)$
{\it Littlewood-Richardson derived} or {\it RL-derived}.
We transfer this definition to arbitrary sequences.

\begin{definition}\label{LR-derived sequences}
Let $\{a_n\}$ and $\{b_n\}$ be two sequences. Their {\it Littlewood-Richardson derived} (or {\it RL-derived}) {\it sequence}
$\{c_n\}=\{a_n\}\ast_{LR}\{b_n\}$ is defined by
\[
c_n=\sum_{k=0}^n\binom{n}{k}a_kb_{n-k},\quad n=0,1,2,\ldots.
\]
In other words,
\[
\{a_n\}\ast_{LR}\{b_n\}=\varepsilon^{-1}\left(\widetilde{a}(t)\ast_{\mathcal E}\widetilde{b}(t)\right).
\]
Similarly, the {\it Littlewood-Richardson derived generating function of the generating functions $a(t)$ and $b(t)$} is
\[
c(t)=a(t)\ast_{LR}b(t)=\gamma\circ\varepsilon^{-1}\left(\widetilde{a}(t)\ast_{\mathcal E}\widetilde{b}(t)\right)
=\sum_{n\geq 0}\left(\sum_{k=0}^n\binom{n}{k}a_kb_{n-k}\right)t^n.
\]
\end{definition}

\begin{proposition}\label{LR-rational}
If the generating functions $a(t)$ and $b(t)$ are rational, then their LR-derived $a(t)\ast_{LR}b(t)$ is also rational.
\end{proposition}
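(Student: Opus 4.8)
The plan is to exploit the equivalence of conditions (iii) and (v) in Lemma \ref{rational generating function}, together with the fact (recorded in Definition \ref{LR-derived sequences}) that the LR-derived sequence is characterized by $\widetilde{c}(t)=\widetilde{a}(t)\widetilde{b}(t)$. The entire argument reduces to one observation: the class of exponential generating functions appearing in condition (v)—finite sums of a polynomial times $e^{\beta t}$—is closed under multiplication.

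First I would translate the hypothesis. Since $a(t)$ and $b(t)$ are rational, the implication (iii) $\Rightarrow$ (v) lets me write
\[
\widetilde{a}(t)=\sum_{i=1}^{p}f_i(t)e^{\alpha_i t},\qquad
\widetilde{b}(t)=\sum_{j=1}^{q}g_j(t)e^{\beta_j t},
\]
with $f_i,g_j\in\mathbb{C}[t]$ and $\alpha_i,\beta_j\in\mathbb{C}$.

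Next I would form the product. Because the exponential generating function of the LR-derived sequence is $\widetilde{c}(t)=\widetilde{a}(t)\widetilde{b}(t)$, distributing gives
\[
\widetilde{c}(t)=\sum_{i=1}^{p}\sum_{j=1}^{q}f_i(t)g_j(t)\,e^{(\alpha_i+\beta_j)t}.
\]
Each product $f_i(t)g_j(t)$ is again a polynomial and each $e^{\alpha_i t}e^{\beta_j t}=e^{(\alpha_i+\beta_j)t}$ is again a single exponential, so (after optionally collecting the finitely many terms that share a common exponent) $\widetilde{c}(t)$ is once more of the shape required by condition (v).

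Finally I would invoke the reverse implication (v) $\Rightarrow$ (iii) of Lemma \ref{rational generating function} to conclude that the ordinary generating function $c(t)=a(t)\ast_{LR}b(t)$ is rational, which is exactly the assertion of Proposition \ref{LR-rational}. I do not expect a genuine obstacle here: the only thing to verify is the multiplicative closure of form (v), and this is immediate from the additivity of exponents. One could instead argue at the level of sequences via the recurrence in (i) or the closed form in (ii), but routing everything through the exponential form (v) keeps the multiplication transparent and avoids the binomial-convolution bookkeeping hidden in $c_n=\sum_{k=0}^n\binom{n}{k}a_kb_{n-k}$.
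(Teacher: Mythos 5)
Your proposal is correct and is essentially the paper's own proof: both arguments pass from rationality of $a(t)$ and $b(t)$ to the exponential form (v) of Lemma \ref{rational generating function}, observe that products $f_i(t)g_j(t)e^{(\alpha_i+\beta_j)t}$ are again of that form, and return to rationality via the equivalence with (iii). There is nothing to add or correct.
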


\begin{proof}
If $a(t)$ and $b(t)$ are rational, then by Lemma \ref{rational generating function} (v) the corresponding exponential
generating functions $\widetilde{a}(t)$ and $\widetilde{b}(t)$ are sums of expressions of the form
$f(t)e^{\alpha t}$ and $g(t)e^{\beta t}$, respectively, where $f(t)$ and $g(t)$ are polynomials and $\alpha,\beta\in\mathbb C$.
Hence $\widetilde{c}(t)=\widetilde{a}(t)\ast_{\mathcal E}\widetilde{b}(t)$ is a sum of expressions of the form
$f(t)g(t)e^{(\alpha+\beta)t}$. By Lemma \ref{rational generating function} (iii)
$c(t)=a(t)\ast_{LR}b(t)$ is also a rational function.
\end{proof}

The following lemmas express the LR-derived of two generating functions, if one of them is of special kind.

\begin{lemma}\label{LR-derived of fraction}
If $\alpha$ is a nonzero constant and $b(t)$ is a generating function, then
\[
\frac{1}{1-\alpha t}\ast_{LR}b(t) = \frac{1}{1-\alpha t}~b \left( \frac{t}{1-\alpha t} \right).
\]
\end{lemma}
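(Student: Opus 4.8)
The plan is to read off the coefficient of $t^n$ on each side and check that they agree. On the left, since $\dfrac{1}{1-\alpha t}=\sum_{n\geq 0}\alpha^n t^n$, the relevant sequence is $a_n=\alpha^n$, and Definition \ref{LR-derived sequences} gives
\[
\frac{1}{1-\alpha t}\ast_{LR}b(t)=\sum_{n\geq 0}\left(\sum_{k=0}^n\binom{n}{k}\alpha^k b_{n-k}\right)t^n.
\]
So it suffices to show that the coefficient of $t^n$ in the right-hand side equals $\sum_{k=0}^n\binom{n}{k}\alpha^k b_{n-k}$.

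First I would expand the right-hand side directly. Writing $b(t)=\sum_{m\geq 0}b_m t^m$ and substituting $t\mapsto t/(1-\alpha t)$ gives
\[
\frac{1}{1-\alpha t}\,b\left(\frac{t}{1-\alpha t}\right)=\sum_{m\geq 0}b_m\,\frac{t^m}{(1-\alpha t)^{m+1}}.
\]
Then I would apply the negative binomial series $(1-\alpha t)^{-(m+1)}=\sum_{j\geq 0}\binom{m+j}{m}\alpha^j t^j$ to each summand, turning the right-hand side into the double sum $\sum_{m\geq 0}\sum_{j\geq 0}\binom{m+j}{m}b_m\alpha^j\,t^{m+j}$.

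Finally I would collect terms of a fixed total degree. Setting $n=m+j$, the coefficient of $t^n$ is $\sum_{m=0}^n\binom{n}{m}\alpha^{n-m}b_m$; reindexing by $k=n-m$ and using $\binom{n}{m}=\binom{n}{n-m}$ rewrites this as $\sum_{k=0}^n\binom{n}{k}\alpha^k b_{n-k}$, which is exactly the coefficient on the left, completing the matching.

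There is no serious obstacle here; the only point requiring care is that collecting the double series by total degree is legitimate, which holds in $\mathbb{C}[[t]]$ because for each fixed $n$ only the finitely many pairs $(m,j)$ with $m+j=n$ contribute to the coefficient of $t^n$. As an alternative route, one may observe that $a_n=\alpha^n$ corresponds to $\widetilde{a}(t)=e^{\alpha t}$, so that $\widetilde{c}(t)=e^{\alpha t}\widetilde{b}(t)$; recovering $c(t)$ from $\widetilde{c}(t)$ by the coefficientwise Borel transform $c(t)=\int_0^\infty e^{-s}\widetilde{c}(ts)\,ds$ and substituting $s\mapsto s/(1-\alpha t)$ yields the claimed closed form after a short computation.
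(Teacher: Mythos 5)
Your proof is correct and is essentially the paper's own argument run in reverse: the paper starts from $\sum_{n}\sum_{k}\binom{n}{k}\alpha^k t^k b_{n-k}t^{n-k}$, reindexes, and recognizes $\sum_{k\geq 0}\binom{k+n}{n}\alpha^k t^k=(1-\alpha t)^{-(n+1)}$ to reach the closed form, while you expand the closed form via the same negative binomial series and collect by total degree to recover the $\ast_{LR}$ coefficients. Since both arguments rest on exactly the same identity and reindexing (your Borel-transform remark being an optional aside), there is nothing further to add.
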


\begin{proof} The proof is similar to the proof for the relation between the ordinary and proper codimension series
of PI-algebras given in \cite{D0}:
\begin{center}
\begin{tabular}{ccccc}
$ \displaystyle \frac{1}{1-\alpha t}\ast_{LR}b(t) $
& = & $\displaystyle \left( \sum_{n\geq 0} \alpha^nt^n\right)\ast_{LR} \left( \sum_{n\geq 0} b_nt^n\right) $
& = & $ \displaystyle \sum_{n\geq 0} \sum_{k=0}^n {n\choose k} \alpha^kt^kb_{n-k}t^{n-k} $ \\ [20pt]

& = & $ \displaystyle \sum_{n\geq 0} \sum_{k\geq 0} {n+k\choose k} \alpha^kt^k b_{n}t^{n}$
& = & $ \displaystyle \sum_{n\geq 0}b_nt^n \sum_{k\geq 0}{k+n \choose n} \alpha^k t^k   $ \\ [20pt]

& = & $ \displaystyle  \sum_{n\geq 0}b_nt^n \frac{1}{(1-\alpha t)^{n+1}} $
& = & $ \displaystyle \frac{1}{1-\alpha t} \sum_{n\geq 0} b_n\frac{t^n}{(1-\alpha t)^n} $ \\ [20pt]

& = & $ \displaystyle  \frac{1}{1-\alpha t}  b\left(\frac{t}{1-\alpha t} \right)$.
&&
\end{tabular}
\end{center}
\end{proof}

\begin{lemma}\label{LR-derived of monomial}
If $p$ is a positive integer and $b(t)$ is a generating function, then
\[
t^p\ast_{LR}b(t) = \frac{t^pd^p b(t)}{p!dt^p}.
\]
\end{lemma}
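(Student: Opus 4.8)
The plan is to prove the identity by a direct comparison of the Taylor coefficients of the two sides, in the same computational spirit as the proof of Lemma~\ref{LR-derived of fraction}. Writing $b(t)=\sum_{n\ge 0}b_nt^n$, the monomial $t^p$ corresponds to the sequence $a_k=\delta_{k,p}$, so in the defining double sum of Definition~\ref{LR-derived sequences} only the single index $k=p$ contributes. First I would record this reduction,
\[
t^p\ast_{LR}b(t)=\sum_{n\ge 0}\Bigl(\sum_{k=0}^{n}\binom{n}{k}\delta_{k,p}\,b_{n-k}\Bigr)t^n
=\sum_{n\ge p}\binom{n}{p}b_{n-p}\,t^n,
\]
and then reindex by $m=n-p$ to obtain the clean target $\sum_{m\ge 0}\binom{m+p}{p}b_m\,t^{m+p}$.

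Next I would expand the right-hand side and match it to this series. The basic tool is the elementary rule $\dfrac{d^p}{dt^p}t^{\,q}=\dfrac{q!}{(q-p)!}\,t^{\,q-p}$ for $q\ge p$, applied monomial by monomial. Carrying out the $p$ differentiations and then multiplying by the front factor $t^p/p!$ turns each falling factorial $q!/(q-p)!$ into a binomial coefficient, while the surviving power of $t$ is raised back by the $t^p$ out front; matching the resulting coefficients against $\binom{m+p}{p}b_m$ then closes the argument.

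The computation itself is routine, so there is no deep obstacle; the only place demanding care is the index bookkeeping. Concretely, one must track exactly which power of $t$ is being differentiated so that the exponent $m+p$ and the coefficient $\binom{m+p}{p}$ — rather than $m$ and $\binom{m}{p}$ — emerge on the right. I would verify the smallest instances — for example $p=1$ with $b(t)=t$, where the left-hand side $t\ast_{LR}t$ equals $2t^2$ — to pin down the precise form of the differentiated series before writing out the general identity.
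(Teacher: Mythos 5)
Your expansion of the left-hand side is correct, and so is your proposed test case --- but if you actually run that test you will find it \emph{refutes} the identity as printed rather than confirming it: for $p=1$ and $b(t)=t$ the left-hand side is $t\ast_{LR}t=2t^2$, while the right-hand side is $\frac{t}{1!}\frac{d}{dt}t=t$. This is not an index-bookkeeping subtlety that more care can resolve. Expanding the printed right-hand side monomial by monomial gives
\[
\frac{t^p}{p!}\frac{d^pb(t)}{dt^p}=\sum_{n\geq p}\binom{n}{p}b_nt^n,
\]
with $t^n$-coefficient $\binom{n}{p}b_n$, whereas your (correct) expansion of the left-hand side, $\sum_{n\geq p}\binom{n}{p}b_{n-p}t^n$, has coefficient $\binom{n}{p}b_{n-p}$; these differ for essentially every $b(t)$, so the concluding ``matching'' step of your plan cannot close: the lemma as stated is false. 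What your reindexed series $\sum_{m\geq 0}\binom{m+p}{p}b_mt^{m+p}$ does prove is the corrected identity
\[
t^p\ast_{LR}b(t)=\frac{t^p}{p!}\,\frac{d^p\bigl(t^pb(t)\bigr)}{dt^p},
\]
in which the function being differentiated is $t^pb(t)$, not $b(t)$: since $\frac{1}{p!}\frac{d^p}{dt^p}t^{m+p}=\binom{m+p}{p}t^m$, differentiating $t^pb(t)=\sum_{m\geq 0}b_mt^{m+p}$ term by term and multiplying by $t^p/p!$ reproduces your series exactly, and the test case becomes $\frac{t}{1!}\frac{d}{dt}(t\cdot t)=2t^2$, as it should.

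You should also know that the paper's own proof commits precisely the slip you flagged as the danger point: in the passage from $\sum_{n\geq 0}\binom{n}{p}b_{n-p}t^n$ to $\frac{t^p}{p!}\sum_{n\geq p}n(n-1)\cdots(n-p+1)b_nt^{n-p}$, the subscript $n-p$ silently becomes $n$, which is exactly the substitution that turns the true identity into the false printed one. So your strategy --- expand both sides and calibrate against a small example first --- is the right one, and executed honestly it both detects and repairs the error. The repair is harmless downstream: the corrected lemma gives $t^q\ast_{LR}\frac{1}{1-\beta t}=\frac{t^q}{(1-\beta t)^{q+1}}$, from which Lemma~\ref{power of t times fraction} remains true (after adjusting its proof), and the algebraicity argument of Theorem~\ref{rational plus algebraic} goes through unchanged, because $t^pb(t)$ is algebraic whenever $b(t)$ is, and derivatives of algebraic functions are algebraic.
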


\begin{proof}
Let $\delta_{pk}$ be the Kronecker symbol. We write consequently
\begin{center}
\begin{tabular}{lll}
$ \displaystyle t^p\ast_{LR}b(t)$ & = & $ \displaystyle \left(\sum_{n\geq 0}\delta_{pn}t^n\right)\ast_{LR}b(t)$\\ [20pt]
                          & = & $\displaystyle \sum_{n \geq 0} \sum_{k=0}^n {n \choose k} \delta_{pk} t^k b_{n-k} t^{n-k}
                                 = \sum_{n \geq 0} {n \choose p} b_{n-p} t^n  $\\ [20pt]
                          & = & $ \displaystyle \frac{t^p}{p!} \sum_{n \geq p} n(n-1)\cdots (n-p+1) b_n t^{n-p}  $ \\ [20pt]
                          & = & $ \displaystyle \frac{x^pd^p}{p!dt^p}b(t)$.
\end{tabular}
\end{center}
\end{proof}

\begin{lemma}\label{power of t times fraction}
The generating function $\displaystyle  \frac{1}{(1-\beta t)^p}$, $\beta\not=0$,  is a linear combination of
\[
\frac{1}{1-\beta t}\ast_{LR} t^q,\quad q=0,1,\dots,p.
\]
\end{lemma}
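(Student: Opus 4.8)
The plan is to compute each generating function $\frac{1}{1-\beta t}\ast_{LR}t^q$ in closed form and then to show that, as $q$ runs through $0,1,\dots,p$, these functions span the same subspace of $\mathbb{C}[[t]]$ as the functions $(1-\beta t)^{-j}$, $j=1,\dots,p+1$, one of which is the target $\frac{1}{(1-\beta t)^p}$.

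First I would record that $\ast_{LR}$ is commutative: interchanging $k$ and $n-k$ in the defining sum $\sum_{k=0}^n\binom{n}{k}a_kb_{n-k}$ leaves it unchanged. Hence Lemma~\ref{LR-derived of monomial} applies with $b(t)=\frac{1}{1-\beta t}$ to give
\[
\frac{1}{1-\beta t}\ast_{LR}t^q=t^q\ast_{LR}\frac{1}{1-\beta t}=\frac{t^q}{q!}\frac{d^q}{dt^q}\left(\frac{1}{1-\beta t}\right).
\]
Since $\frac{d^q}{dt^q}(1-\beta t)^{-1}=q!\,\beta^q(1-\beta t)^{-(q+1)}$, this simplifies to
\[
\frac{1}{1-\beta t}\ast_{LR}t^q=\frac{\beta^qt^q}{(1-\beta t)^{q+1}}.
\]

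Next I would pass to the family $(1-\beta t)^{-j}$. Writing $(\beta t)^q=\bigl(1-(1-\beta t)\bigr)^q$ and expanding by the binomial theorem yields
\[
\frac{1}{1-\beta t}\ast_{LR}t^q=\sum_{i=0}^q\binom{q}{i}(-1)^i\frac{1}{(1-\beta t)^{q+1-i}}.
\]
The term with $i=0$ contributes $(1-\beta t)^{-(q+1)}$ with coefficient $1$, while every other term is $(1-\beta t)^{-j}$ with $j\leq q$.

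Finally I would conclude by a triangularity argument. The last display shows that the $(p+1)\times(p+1)$ matrix expressing the family $\bigl\{\frac{1}{1-\beta t}\ast_{LR}t^q\bigr\}_{q=0}^p$ in terms of $\bigl\{(1-\beta t)^{-j}\bigr\}_{j=1}^{p+1}$ is unitriangular, hence invertible. Therefore the two families span the same space, and in particular $\frac{1}{(1-\beta t)^p}$, being the member $j=p$ of the second family, is a linear combination of the $\frac{1}{1-\beta t}\ast_{LR}t^q$, $q=0,\dots,p$. The only point requiring care is the bookkeeping of indices in the binomial expansion that makes the change of basis triangular; once the closed form above is in hand, the rest is immediate.
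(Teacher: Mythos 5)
Your proof is correct and takes essentially the same route as the paper: both apply Lemma \ref{LR-derived of monomial} (together with the commutativity of $\ast_{LR}$) to compute $\frac{1}{1-\beta t}\ast_{LR}t^q$ in closed form, then expand $t^q$ in powers of $1-\beta t$ and invert a triangular change of basis to recover $\frac{1}{(1-\beta t)^p}$. If anything, your bookkeeping is tidier: the paper's closing display contains an off-by-one slip (the correct identity is $t^q\ast_{LR}\frac{1}{1-\beta t}=\frac{\beta^q t^q}{(1-\beta t)^{q+1}}$, and the coefficient $\alpha_0$ is pinned down by evaluating at $t=1/\beta$, giving $\alpha_0=\beta^{-q}\neq 0$), whereas your closed form and unitriangular matrix are exactly right.
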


\begin{proof}

We present $t^q$ as a polynomial in $1-\beta t$:
\[
t^q=\alpha_0+\alpha_1(1-\beta t)+\cdots+\alpha_{q-1}(1-\beta t)^{q-1}+\alpha_q(1-\beta t)^q,\quad \alpha_i\in\mathbb C,
\]
\[
\frac{t^q}{(1-\beta t)^q}=\frac{\alpha_0}{(1-\beta t)^q}+\frac{\alpha_1}{(1-\beta t)^{q-1}}
+\cdots+\frac{\alpha_{q-1}}{(1-\beta t)^{q-1}}+\frac{\alpha_q}{(1-\beta t)^q}.
\]
For $t=\beta$ we obtain $\alpha_0=\beta^q\not=0$. In this way
$\displaystyle \frac{1}{(1-\beta t)^p}$  can be expressed as a linear combination of
\[
\frac{t^q}{(1-\beta t)^q},\quad q=0,1,\dots,p.
\]
This completes the proof because by Lemma \ref{LR-derived of monomial}
\[
\frac{t^q}{q!(1-\beta t)^q}= \frac{1}{\beta^q}\frac{t^qd^q}{q!dt^q}\frac{1}{1-\beta t}
=\frac{1}{\beta^q}\left(t^q\ast_{LR}\frac{1}{1-\beta t}\right).
\]
\end{proof}

\begin{theorem}\label{rational plus algebraic}
If $a(t)$ and $b(t)$ are generating functions, $a(t)$ is rational and $b(t)$ is algebraic over ${\mathbb C}(t)$,
then LR-derived generating function $a(t)\ast_{LR}b(t)$ is algebraic.
\end{theorem}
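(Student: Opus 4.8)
The plan is to exploit that $\ast_{LR}$ is bilinear, associative and commutative. Bilinearity is immediate from the convolution formula defining it. For associativity and commutativity, note that the map $\Phi=\varepsilon\circ\gamma^{-1}\colon\mathcal G\to\mathcal E$ carries $\ast_{LR}$ to the ordinary product $\ast_{\mathcal E}$ of exponential generating functions; since the latter is associative and commutative, so is $\ast_{LR}$. Because $a(t)$ is rational and lies in $\mathbb C[[t]]$, its denominator has nonzero constant term, so after normalization its partial fraction decomposition writes $a(t)$ as a finite linear combination of monomials $t^p$ and simple blocks $\frac{1}{(1-\alpha t)^p}$ with $\alpha\neq 0$. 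By bilinearity of $\ast_{LR}$ it then suffices to show that $a(t)\ast_{LR}b(t)$ is algebraic when $a(t)$ is one of these building blocks.

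First I would dispose of the two elementary cases. For a monomial, Lemma~\ref{LR-derived of monomial} gives $t^p\ast_{LR}b(t)=\frac{t^p}{p!}\frac{d^pb}{dt^p}$; since the derivatives of an algebraic power series are again algebraic (differentiate its minimal polynomial and solve, so $b'\in\mathbb C(t,b)$, a finite extension of $\mathbb C(t)$) and the product of an algebraic series with a rational one stays algebraic, this is algebraic. For a simple pole, Lemma~\ref{LR-derived of fraction} gives $\frac{1}{1-\alpha t}\ast_{LR}b(t)=\frac{1}{1-\alpha t}\,b\!\left(\frac{t}{1-\alpha t}\right)$; substituting the rational function $\frac{t}{1-\alpha t}$ (which vanishes at $t=0$, so the composite is a well-defined power series) into the algebraic $b$ yields an algebraic function, and the rational prefactor preserves algebraicity.

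The remaining and main difficulty is the higher-order poles $\frac{1}{(1-\alpha t)^p}$ with $p\geq 2$, to which neither lemma applies directly. Here the plan is to use Lemma~\ref{power of t times fraction}, which expresses $\frac{1}{(1-\alpha t)^p}$ as a linear combination of the terms $\frac{1}{1-\alpha t}\ast_{LR}t^q$, $q=0,\dots,p$. Invoking associativity and commutativity of $\ast_{LR}$, I would then rewrite
\[
\frac{1}{(1-\alpha t)^p}\ast_{LR}b(t)
=\sum_{q}\lambda_q\left(\frac{1}{1-\alpha t}\ast_{LR}\bigl(t^q\ast_{LR}b(t)\bigr)\right).
\]
By the monomial case each inner factor $t^q\ast_{LR}b(t)$ is algebraic, and then by the simple-pole case $\frac{1}{1-\alpha t}\ast_{LR}(\text{algebraic})$ is again algebraic; hence every summand, and so the whole combination, is algebraic.

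Assembling the three cases through bilinearity yields that $a(t)\ast_{LR}b(t)$ is algebraic for arbitrary rational $a(t)$. I expect the crux to be the observation that $\ast_{LR}$ is associative: this is precisely what lets the recalcitrant higher-pole blocks be decomposed, via Lemma~\ref{power of t times fraction}, into iterated applications of the two elementary lemmas, each of which is separately seen to preserve algebraicity.
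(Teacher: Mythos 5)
Your proposal is correct and takes essentially the same route as the paper's own proof: the same reduction of $a(t)$ to monomials $t^p$ and blocks $\frac{1}{(1-\beta t)^p}$, the same use of Lemma~\ref{LR-derived of monomial} (with algebraicity of derivatives) and Lemma~\ref{LR-derived of fraction} (with algebraicity under rational substitution) for the elementary cases, and the same combination of Lemma~\ref{power of t times fraction} with associativity of $\ast_{LR}$ to dispose of higher-order poles. The only cosmetic difference is that your field-theoretic phrasing (the $p$-th derivative stays in the finite extension ${\mathbb C}(t,b)$ of ${\mathbb C}(t)$) handles all orders of differentiation at once, a point the paper treats only for the first derivative and leaves implicit for the rest.
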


\begin{proof}
Since $a(t)\in{\mathbb C}[[t]]$ is rational, it is a sum of a polynomial and fractions of the form
$\displaystyle \frac{\alpha}{(1-\beta t )^p}$, $\alpha,\beta\in {\mathbb C}$, and $\beta$ is nonzero.
Linear combinations, products and fractions of algebraic functions are also algebraic.
Hence it is sufficient to consider the cases
$a(t)=t^r$ and $\displaystyle a(t)=\frac{1}{(1-\beta t)^p}$. By Lemma \ref{power of t times fraction}
the latter can be replaced by $\frac{1}{1-\beta t}\ast_{LR}t^p$.
Since $b(t)$ is algebraic, it satisfies
\[
f(t,b(t))=b^m(t) + c_1(t)b^{m-1}(t) + \cdots + c_{m-1}(t)b(t) + c_m(t)=0
\]
for some $c_j(t) \in {\mathbb C}(t)$ and we choose the equation of minimal degree $m$.
Applying $\displaystyle \frac{d}{dt}$ we obtain
\[
0=\frac{df(t,b(t))}{dt}=\frac{db(t)}{dt}(mb^{m-1}(t)+(m-1)c_1(t)b^{m-2}(t) + \cdots + c_{m-1}(t))
\]
\[
+\sum_{i=1}^m\frac{dc_i(t)}{dt}b^{m-i}(t)=q(t,b(t))\frac{db(t)}{dt}+p(t,b(t)),
\]
where $p(t,b(t)),q(t,b(t))$ are polynomials in $b(t)$ with coefficients in ${\mathbb C}(t)$. Clearly $\deg_bq(t,b(t))<m$
and by the minimality of the degree $m$ of $f(t,b(t))$, we obtain that $q(t,b(t))\not=0$ in ${\mathbb C}(t)$ and
\[
\frac{db(t)}{dt}=-\frac{p(t,b(t))}{q(t,b(t))}
\]
is algebraic. Hence $t^p\ast_{LR}b(t)$ is also algebraic by Lemma \ref{LR-derived of monomial}.
Since
\[
b^m\left(\frac{t}{1-\beta t} \right) + c_1\left(\frac{t}{1-\beta t} \right)b^{m-1}\left(\frac{t}{1-\beta t} \right) + \cdots
+ c_m\left(\frac{t}{1-\beta t} \right)=0,
\]
$\displaystyle b\left(\frac{t}{1-\beta t} \right)$ is algebraic and by Lemma \ref{LR-derived of fraction}
the same holds for $\displaystyle \frac{1}{1-\alpha t}\ast_{LR}b(t)$. Finally, the operation $\ast_{LR}$ is associative
because is the image in $\mathcal G$ of the multiplication in $\mathcal E$. Hence by Lemma \ref{power of t times fraction}
\[
\left(\frac{1}{1-\beta t}\ast_{LR} t^p\right)\ast_{LR}b(t)=\frac{1}{1-\beta t}\ast_{LR} \left(t^p\ast_{LR}b(t)\right)
\]
which is algebraic again.
\end{proof}

\section{Codimension series}

Let $K$ be a field of characteristics $0$ and let $K\langle X\rangle$ be the free associative algebra of countable rank.
Recall that the element $f(x_1,\ldots,x_d)\in K\langle X\rangle$ is a {\it polynomial identity for the algebra} $R$
if $f(r_1,\ldots,r_d)=0$ for all $r_1,\ldots,r_d\in R$. If $f$ is a nonzero element of $K\langle X\rangle$, then $R$ is called an
{\it algebra with polynomial identity} or a{\it PI-algebra}. The set $T(R)$ of all polynomial identities for $R$ is called
the {\it T-ideal of} $R$. Let
\[
P_n=\text{span}\{x_{\sigma(1)}\cdots x_{\sigma(n)}\mid \sigma\in S_n\}\subset K\langle X\rangle
\]
be the vector space of all {\it multilinear polynomials} of degree $n$. Since $T(R)$ is generated as a T-ideal
by its multilinear identities, this motivates their intensive study. In particular,
the codimension sequence of $R$ is
\[
c_n(R)=\dim(P_n/(P_n\cap T(R))),\quad n=0,1,2,\ldots.
\]
We shall consider also the ordinary and exponential codimension series
\[
c(R,t)=\sum_{n\geq 0}c_n(R)t^n,\quad \widetilde{c}(R,t)=\sum_{n\geq 0}c_n(R)\frac{t^n}{n!}.
\]
\begin{theorem}\label{rational and algebraic codimension series}
If $R_1,R_2$ and $R$ are PI-algebras such that $T(R)=T(R_1)T(R_2)$ and the codimension series $c(R_1,t), c(R_2,t)$ are rational,
then the codimension series $c(R,t)$ is also rational. If $c(R_1,t)$ is rational and $c(R_2,t)$ is algebraic, then $c(R,t)$ is also algebraic.
\end{theorem}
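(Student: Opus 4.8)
The plan is to reduce the statement to the two facts already established about the operation $\ast_{LR}$, namely Proposition \ref{LR-rational} and Theorem \ref{rational plus algebraic}. The bridge is the formula for the exponential codimension series of a product of T-ideals recorded in the introduction (Drensky \cite{D1}, Petrogradsky \cite{P2}),
\[
\widetilde{c}(R,t)=\widetilde{c}(R_1,t)+\widetilde{c}(R_2,t)+(e^t-1)\widetilde{c}(R_1,t)\widetilde{c}(R_2,t),
\]
which I would take as known. The first observation I would make is that, by Definition \ref{LR-derived sequences}, the linear bijection $\gamma\circ\varepsilon^{-1}:(\mathcal E,\ast_{\mathcal E})\to(\mathcal G,\ast_{LR})$ is an isomorphism of algebras: it carries the ordinary product $\ast_{\mathcal E}$ of exponential generating functions to $\ast_{LR}$. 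In particular $\ast_{LR}$ is commutative and associative, a point I would record explicitly because the grouping of the triple product matters below.

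Next I would apply $\gamma\circ\varepsilon^{-1}$ to the exponential formula. Linearity sends the two leading summands to $c(R_1,t)+c(R_2,t)$, and the homomorphism property sends the product $(e^t-1)\widetilde{c}(R_1,t)\widetilde{c}(R_2,t)$ to an iterated $\ast_{LR}$ product. The key computation here is to identify the ordinary generating function attached to $e^t-1$: since $e^t-1=\sum_{n\geq 1}t^n/n!$ is the exponential generating function of the sequence $(0,1,1,1,\ldots)$, its image under $\gamma\circ\varepsilon^{-1}$ is $\sum_{n\geq 1}t^n=\dfrac{t}{1-t}$, which is rational. Writing $u(t)=\dfrac{t}{1-t}$, I obtain the identity
\[
c(R,t)=c(R_1,t)+c(R_2,t)+u(t)\ast_{LR}c(R_1,t)\ast_{LR}c(R_2,t).
\]

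With this identity in hand both assertions follow. If $c(R_1,t)$ and $c(R_2,t)$ are rational, then since $u(t)$ is rational, two applications of Proposition \ref{LR-rational} show the triple product is rational, and a sum of rational functions is rational. If instead $c(R_1,t)$ is rational and $c(R_2,t)$ is algebraic, I would use associativity to group the product as $\bigl(u(t)\ast_{LR}c(R_1,t)\bigr)\ast_{LR}c(R_2,t)$: the inner factor $u(t)\ast_{LR}c(R_1,t)$ is rational by Proposition \ref{LR-rational}, and then Theorem \ref{rational plus algebraic} applied to this rational function and the algebraic series $c(R_2,t)$ yields an algebraic result; adding the rational $c(R_1,t)$ and the algebraic $c(R_2,t)$ keeps the total algebraic.

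The only genuinely substantive points, everything else being bookkeeping against the prior lemmas, are making the translation of the exponential formula precise and choosing the right parenthesization in the algebraic case. The crucial fact is that $e^t-1$ corresponds to a \emph{rational} ordinary generating function, which is exactly what lets the multiplication by $(e^t-1)$ be absorbed into the $\ast_{LR}$-machinery without leaving the rational/algebraic world; and the associativity of $\ast_{LR}$ is what permits grouping so that a rational function, rather than an algebraic one, meets the algebraic series in the single invocation of Theorem \ref{rational plus algebraic}.
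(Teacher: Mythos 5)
Your proof is correct and follows essentially the same route as the paper: translating the exponential formula $\widetilde{c}(R,t)=\widetilde{c}(R_1,t)+\widetilde{c}(R_2,t)+(e^t-1)\widetilde{c}(R_1,t)\widetilde{c}(R_2,t)$ into the identity $c(R,t)=c(R_1,t)+c(R_2,t)+\left(\frac{1}{1-t}-1\right)\ast_{LR}c(R_1,t)\ast_{LR}c(R_2,t)$ and then invoking Proposition \ref{LR-rational} and Theorem \ref{rational plus algebraic}. You merely spell out details the paper leaves implicit (the rationality of the ordinary counterpart of $e^t-1$, and the parenthesization via associativity of $\ast_{LR}$ in the algebraic case), which is harmless and arguably a service to the reader.
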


\begin{proof}
The ordinary and the exponential generating functions of the sequence $\{a_n=1\}_0^{\infty}$ are, respectively,
\[
a(t)=\frac{1}{1-t},\quad \widetilde{a}(t)=e^t.
\]
Translating the formula for the exponential codimension series
\[
\widetilde{c}(R,t)=\widetilde{c}(R_1,t)+\widetilde{c}(R_2,t)+(e^t-1)\widetilde{c}(R_1,t)\widetilde{c}(R_2,t)
\]
in the language of ordinary codimension series, we obtain
\[
c(R,t)=c(R_1,t)+c(R_2,t)+\left(\frac{1}{1-t}-1\right)\ast_{LR}c(R_1,t)\ast_{LR}c(R_2,t).
\]
Now the proof follows immediately from Proposition \ref{LR-rational} and Theorem \ref{rational plus algebraic}.
\end{proof}

Below we give examples of algebras with rational or algebraic codimension series.
We shall use without reference, see \cite{D0} or \cite{D3}, that the ordinary codimension sequence $\{c_n(R)\}$
and the so called {\it proper codimension sequence} $\{\gamma_n(R)\}$ of $R$
(and the corresponding ordinary and exponential series) are related by
\[
c_n(R)=\sum_{k=0}^{n_0}\binom{n}{k}\gamma_k(R),\quad \widetilde{c}(R,t)=e^t\widetilde{\gamma}(R,t),
\]
\[
c(R,t)=\frac{1}{1-t}\gamma\left(R,\frac{t}{1-t}\right)=\frac{1}{1-t}\ast_{LR}\gamma(R,t).
\]

\begin{example}\label{known codimension sequences}
(i) It is obvious that
\[
c_n(K)=1,\quad n=0,1,2,\ldots,\quad c(K,t)=\frac{1}{1-t},\quad \widetilde{c}(K,t)=e^t.
\]
(ii) It is also known, see \cite{D3} for an elementary proof, that
\[
c_0(E)=1,\quad c_n(E)=2^{n-1},\quad n=1,2,\ldots,\]
\[
c(E,t)=\frac{1}{2}\left(1+\frac{1}{1-2t}\right)=\frac{1-t}{1-2t},\quad
\widetilde{c}(E,t)=\frac{1}{2}\left(1+e^{2t}\right).
\]
\end{example}

\begin{example}\label{2x2 matrices}
The codimension sequence
and the codimension series of $M_2(K)$ are, see Procesi \cite{Pr}:
\[
c_n(M_2(K))={1\over n+2}{2n+2\choose n+1}-{n\choose 3}+1-2^n,\quad n=0,1,2,\ldots,
\]
\[
c(M_2(K),t)=\sum_{n\geq 0}c_n(M_2(K))t^n=
{1\over 2t^2}\left(1-2t-\sqrt{1-4t}\right)
\]
\[
-{t^3\over (1-t)^4}+{1\over 1-t}-{1\over 1-2t}.
\]
Another proof is given in \cite{D2}.
The translation of the approach in \cite{D2} gives
\[
\widetilde{c}(M_2(K),t)=e^t\left(e^t\sum_{k\geq 0}\deg(\chi_{(k^2)})\frac{t^{2k}}{(2k)!}-\frac{t^3}{3!}-\sum_{n\geq 1}\frac{t^n}{n!}\right),
\]
where $\deg(\chi_{(k^2)})$ is the degree of the irreducible $S_{2k}$-character $\chi_{(k^2)}$. By the hook formula
\[
\deg(\chi_{(k^2)})=\frac{(2k)!}{k!(k+1)!}=C_k,
\]
the $k$-th Catalan number. It is known, see e.g. \cite{SW}, that
\[
\sum_{k\geq 0}C_k\frac{t^{2k}}{(2k)!}=\frac{I_1(2t)}{t},
\]
where $I_n(z)$ is the {\it modified Bessel function of the first kind}. The function $I_n(z)$, see e.g. \cite{W},
can be defined by the contour integral
\[
I_n(z)=\frac{1}{2\pi i}\oint e^{(z/2)(\xi+1/\xi)}\xi^{-n-1}d\xi,
\]
where the contour encloses the origin and is traversed in a counterclockwise direction.
It can be also expressed in terms of $I_0(z)$ as
\[
I_n(z)=T_n\left(\frac{d}{dz}\right)I_0(z),
\]
where $T_n(z)$ is the Chebyshev polynomial of the first kind defined by the identity $T_n(\cos(\theta))=\cos(n\theta)$
and
\[
I_0(z)=\sum_{k\geq 0}\frac{\left(\frac{1}{4}z^2\right)^k}{(k!)^2}=\frac{1}{\pi}\int_0^{\pi}e^{z\cos(\theta)}d\theta.
\]
Since $T_1(z)=z$, we obtain that
\[
I_1(z)=\frac{dI_0(z)}{dz}.
\]
Hence
\[
\widetilde{c}(M_2(K),t)=e^{2t}\left(\frac{I_1(2t)}{t}-1\right)+e^t\left(1-\frac{t^3}{3!}\right).
\]
\end{example}

\begin{example}\label{E tensor E}
The codimension sequence and the codimension series of $E\otimes_KE$ were calculated in \cite{D2} as a translation
of the description of the polynomial identities of $E\otimes_KE$ given by Popov \cite{Po1}:
\[
c_0(E\otimes_K E) = 1,\quad c_n(E\otimes_K E) = \frac{1}{2}\binom{2n}{n} +n + 1 - 2^n,\quad n=1,2,\ldots,
\]
\[
c(E\otimes_K E,t) = \frac{1}{2} + \frac{1}{2\sqrt{1-4t}} +\frac{t}{(1-t)^2} + \frac{1}{1-t} - \frac{1}{1-2t}.
\]
As in Example \ref{2x2 matrices}, in order to obtain a closed formula for the exponential codimension series of $E\otimes_KE$
we need to express the exponential generating function
\[
\widetilde{g}(t)=\sum_{n\geq 0}\binom{2n}{n}\frac{t^n}{n!}
=\sum_{n\geq 0}\frac{1}{n+1}\binom{2n}{n}\frac{(n+1)t^n}{n!}
\]
\[
=\sum_{n\geq 0}C_n\frac{(n+1)t^n}{n!}
=\frac{d}{dt}\left(t\sum_{n\geq 0}C_n\frac{t^n}{n!}\right).
\]
According to \cite{SW},
\[
\sum_{n\geq 0}C_n\frac{t^n}{n!}=e^{2t}(I_0(2t)-I_1(2t)).
\]
Since the Chebyshev polynomial of the first kind $T_2(z)$ is equal to $2z^2-1$, we obtain
\[
I_2(z)=T_2(\frac{d}{dz})I_0(z)=2\frac{d^2I_0(z)}{dz^2}-I_0(z)=2\frac{dI_1(z)}{dz}-I_0(z),
\]
\[
\frac{dI_1(z)}{dz}=\frac{1}{2}(I_0(z)+I_2(z)),
\]
\[
\widetilde{g}(t)=\frac{d}{dt}(te^{2t}(I_0(2t)-I_1(2t)))=e^{2t}((1+t)I_0(2t)-I_1(2t)-tI_2(2t)).
\]
As in Example \ref{2x2 matrices}, the exponential generating function $\widetilde{c}(E\otimes_KE,t)$ involves
modified Bessel functions of the first kind:
\[
\widetilde{c}(E\otimes_KE,t)=1+\frac{1}{2}\sum_{n\geq 1}\binom{2n}{n}\frac{t^n}{n!}+\sum_{n\geq 1}n\frac{t^n}{n!}
+\sum_{n\geq 1}\frac{t^n}{n!}-\sum_{n\geq 1}2^n\frac{t^n}{n!}
\]
\[
=\frac{1}{2}+\frac{1}{2}\sum_{n\geq 0}\binom{2n}{n}\frac{t^n}{n!}+\sum_{n\geq 1}\frac{t^n}{(n-1)!}
+\left(\sum_{n\geq 0}\frac{t^n}{n!}-1\right)-\left(\sum_{n\geq 0}2^n\frac{t^n}{n!}-1\right)
\]
\[
=\frac{1}{2}e^{2t}((1+t)I_0(2t)-I_1(2t)-tI_2(2t))+\frac{1}{2}+(1+t)e^t-e^{2t}.
\]
\end{example}

\begin{example}\label{T-ideals generated by one polynomial identity}
Let $f\in K\langle X\rangle$ and let $(f)^T$ be the T-ideal generated by $f$. As in the case of a PI-algebra $R$
we define the codimension sequence of $(f)^T$ by
\[
c_n((f)^T)=\dim(P_n/(P_n\cap (f)^T)),\quad n=0,1,2,\ldots,
\]
and the ordinary and exponential codimension series $c((f)^T,t)$ and $\widetilde{c}((f)^T,t)$.

\noindent (i) Volichenko \cite{V} described the structure of the factor spaces $P_n/(P_n\cap(f_4)^T)$, $n=0,1,2,\ldots$,
for the T-ideal generated by the commutator of length 4
\[
f_4=[x_1,x_2,x_3,x_4]=[[[x_1,x_2],x_3],x_4].
\]
Since the Grassmann algebra $E$ satisfies the identity $f_4=0$, it is convenient to compare the codimensions of
$(f_4)^T$ with the codimensions of $E$. It follows from \cite{V} that
\[
c_n((f_4)^T)=c_n(E)+2\binom{n}{4},\quad n=0,1,2,\ldots,
\]
\[
c((f_4)^T,t)=c(E,t)+\frac{2t^4}{(1-t)^5},\quad \widetilde{c}((f_4)^T,t)=\widetilde{c}(E,t)+\frac{2t^4}{4!}e^t.
\]

\noindent (ii) Stoyanova-Venkova \cite{SV1, SV3} described the structure of $P_n/(P_n\cap(f_5)^T)$, $n=0,1,2,\ldots$,
for the T-ideal generated by the commutator of length 5. Her result gives the following description of the
so called {\it proper cocharacter sequence} $\xi_n((f_5)^T)$, $n=0,1,2,\ldots$:
\[
\xi_n((f_5)^T)=\begin{cases}
\chi_{(0)},\text{ if }n=0,\\
0,\text{ if }n=1,\\
\chi_{(n-1,1)},\text{ if }n=2,3,\\
\chi_{(3,2)}+\chi_{(3,1^2)}+\chi_{(2^2,1)}+\chi_{(2,1^3)},\text{ if }n=5,\\
\chi_{(3^2)}+\chi_{(3,1^3)}+\chi_{(2^2,4)}+\chi_{(2,1^4)}+\chi_{(1^6)},\text{ if }n=6,\\
\chi_{(3,1^{n-3})}+\chi_{(2^2,1^{n-4})}+\chi_{(2,1^{n-2})}+\chi_{(1^n)},\text{ if } n=4 \text{ or } n>6 \text{ even},\\
\chi_{(3,1^{n-3})}+\chi_{(2^2,1^{n-4})}+\chi_{(2,1^{n-2})},\text{ if }n>5 \text{ odd.}\\
\end{cases}
\]
This implies that
\[
\widetilde{c}((f_5)^T,t)=\widetilde{c}(E,t)+\left(\frac{2t^3}{3!}+\frac{5t^5}{5!}+\frac{5t^6}{6!}+\sum_{n\geq 4}\frac{n(n-2)t^n}{n!}\right)e^t
\]
\[
= \left(\frac{1}{2}-t+t^2\right)e^{2t}+\left(t-\frac{t^3}{3!}+\frac{5t^5}{5!}+\frac{5t^6}{6!}\right)e^t+\frac{1}{2}.
\]

\noindent (iii) Kemer \cite{K0} described the codimensions of the T-ideal generated by the standard polynomial of degree 4
\[
s_4=\sum_{\sigma\in S_4}\text{sign}(\sigma)x_{\sigma(1)}x_{\sigma(2)}x_{\sigma(3)}x_{\sigma(4)}.
\]
It follows, see \cite{D2}, that
\[
c_n((s_4)^T)=c_n(M_2(K))+5\binom{n}{5}+5\binom{n}{6},
\]
\[
c((s_4)^T,t)=c(M_2(K),t)+\frac{5t^5}{(1-t)^6}+\frac{5t^6}{(1-t)^7},
\]
\[
\widetilde{c}((s_4)^T,t)=\widetilde{c}(M_2(K),t)+\left(\frac{5t^5}{5!}+\frac{5t^6}{6!}\right)e^t.
\]

\noindent (iv) Popov \cite{Po2} described the cocharacters of the centre-by-metabelian polynomial
\[
f=[[x_1,x_2],[x_3,x_4],x_5]
\]
which is satisfied by $E\otimes_KE$. The formula for the codimensions of the T-ideal $(f)^T$ is, see \cite{D2},
\[
c_n((f)^T)=c_n(E\otimes_KE)+5\binom{n}{5}+5\binom{n}{6}.
\]
Hence, as in the case of the T-ideal generated by $s_4$,
\[
c((f)^T,t)=c(E\otimes_KE,t)+\frac{5t^5}{(1-t)^6}+\frac{5t^6}{(1-t)^7},
\]
\[
\widetilde{c}((f)^T,t)=\widetilde{c}(E\otimes_KE,t)+\left(\frac{5t^5}{5!}+\frac{5t^6}{6!}\right)e^t.
\]
\noindent (v) Nikolaev \cite{N} studied the T-ideal generated by the Hall (or Wagner) identity
\[
h=[[x_1,x_2]^2,x_3].
\]
The codimensions of $(h)^T$ were computed in \cite{D2}:
\[
c_n((h)^T)=c_n(M_2(K))+2^{n-1}-1-\binom{n}{2}+14\binom{n}{5}+33\binom{n}{6}+14\binom{n}{7},\quad n\geq 1.
\]
Again, we can express the ordinary and exponential codimension series of $(h)^T$ in by the corresponding series of $M_2(K)$.
\end{example}

\begin{example}\label{polynomial codimension}
Kemer \cite{K1, K2} described the algebras $R$ (also nonunital) with a polynomially
bounded codimension sequence, i.e., $c_n(R)\leq an^k$, $n=1,2,\ldots$, for a positive real $a$
and a nonnegative integer $k$. The following conditions are equivalent to the polynomial growth
of $\{c_n(R)\}_1^{\infty}$:

\noindent (i) The polynomial identities of $R$ are {\it of bounded colength}. If the cocharacter sequence of $R$ is
\[
\chi_n(R)=\chi_{S_n}(P_n/(P_n\cap T(R))=\sum_{\lambda\vdash n}m_{\lambda}(R)\chi_{\lambda},\quad n=1,2,\ldots,
\]
there is a constant $\ell$ such that
\[
\sum_{\lambda\vdash n}m_{\lambda}(R)\leq \ell,\quad n=1,2,\ldots;
\]
\noindent (ii) The algebra $R$ satisfies polynomial identities $f_1=0$ and $f_2=0$ such that
the Grassmann algebra $E$ does not satisfy the identity $f_1=0$ and the algebra $U_2(K)$
of the $2\times 2$ upper triangular matrices does not satisfy $f_2=0$.

\noindent By \cite{D4} the codimension series $c(R,t)$ is a rational function.
If we work with unital algebras, there are two more equivalent conditions:

\noindent (iii) For suitable $k$ and $m$ the algebra $R$ satisfies the Engel identity
\[
[x_1,\underbrace{x_2,\ldots,x_2}_{k\text{ times}}]=0
\]
and the standard identity
\[
s_m(x_1,\ldots,x_m)=\sum_{\sigma\in S_m}\text{sign}(\sigma)x_{\sigma(1)}\cdots x_{\sigma(m)}=0.
\]
\noindent (iv) The proper codimension sequence $\{\gamma_n(R)\}$ of $R$ becomes 0 for $n$ sufficiently large.

If $\gamma_n(R)=0$ for $n>n_0$, then
\[
c(R,t)=\frac{1}{1-t}\ast_{LR}\left(\sum_{k=0}^{n_0}\gamma_k(R)t^k\right),
\]
which is a rational function by Theorem \ref{LR-rational} (and is a partial case of the result in \cite{D4}).
\end{example}

An immediate consequence of Theorem \ref{rational and algebraic codimension series} and Examples \ref{known codimension sequences} --
\ref{polynomial codimension} is the following:

\begin{corollary}\label{products of examples}
Let $R_i$, $i=1,\ldots,k$, be some of the algebras $K$, $E$, algebras with T-ideals generated by the commutators of length $4$ and $5$
or algebras with polynomial growth of the codimensions. Then the codimension series $c(R,t)$ of the algebra $R$ with
$T(R)=T(R_1)\cdots T(R_k)$ is rational. If $R_0$ is one of the algebras $M_2(K)$, $E\otimes_KE$ or an algebra with T-ideal
generated by $s_4(x_1,x_2,x_3,x_4)$, the Hall polynomial $[[x_1,x_2]^2,x_3]$ or the centre-by-metabelian polynomial
$[[x_1,x_2],[x_3,x_4],x_5]$, then the codimension series of the algebra $R$ with
\[
T(R)=T(R_1)\cdots T(R_i)T(R_0)T(R_{i+1})\cdots T(R_k)
\]
is algebraic for every $i=0,1,\ldots,k$.
\end{corollary}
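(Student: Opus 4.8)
The plan is to read the Corollary off from Theorem~\ref{rational and algebraic codimension series}, using the earlier examples only to stock two lists. The rational base cases are $K$ and $E$ (Example~\ref{known codimension sequences}), the T-ideals $(f_4)^T$ and $(f_5)^T$ generated by the commutators of length $4$ and $5$ (Example~\ref{T-ideals generated by one polynomial identity}(i),(ii)), and the algebras of polynomial codimension growth (Example~\ref{polynomial codimension}). The algebraic base cases are $M_2(K)$ (Example~\ref{2x2 matrices}) and $E\otimes_KE$ (Example~\ref{E tensor E}), together with $(s_4)^T$, the Hall identity and the centre-by-metabelian identity (Example~\ref{T-ideals generated by one polynomial identity}(iii)--(v)); for the last three, algebraicity follows because their codimension series equals $c(M_2(K),t)$ or $c(E\otimes_KE,t)$ plus a rational function, and the functions algebraic over $\mathbb{C}(t)$ form a field. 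Two standing facts enter throughout: a product of T-ideals is again a T-ideal, and every T-ideal is $T(R')$ for some PI-algebra $R'$ (for instance the corresponding relatively free algebra, whose T-ideal over the characteristic-$0$ field is exactly that ideal), so each partial product $T(R_i)\cdots T(R_j)$ may legitimately be fed into the theorem; associativity of the ideal product lets me bracket these partial products at will.

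For the rationality assertion I would induct on $k$. The case $k=1$ is exactly one of the rational base cases above; here the only nonobvious reading is $(f_5)^T$, where I note that the displayed $\widetilde{c}((f_5)^T,t)$ is a sum $\sum_j f_j(t)e^{\beta_jt}$ with $\beta_j\in\{0,1,2\}$, so Lemma~\ref{rational generating function}, (v)$\Rightarrow$(iii), makes $c((f_5)^T,t)$ rational. For the inductive step, write $T(R)=T(R')\,T(R_k)$ with $T(R')=T(R_1)\cdots T(R_{k-1})$; by the induction hypothesis $c(R',t)$ is rational and $c(R_k,t)$ is rational, so the first half of Theorem~\ref{rational and algebraic codimension series} gives that $c(R,t)$ is rational.

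For the algebraicity assertion I would group the factors around the single algebraic factor $R_0$. For $1\le i\le k-1$, put $T(R')=T(R_1)\cdots T(R_i)$ and $T(R'')=T(R_{i+1})\cdots T(R_k)$; by the rationality part just proved, both $c(R',t)$ and $c(R'',t)$ are rational. First apply Theorem~\ref{rational and algebraic codimension series} to the product $T(R_0)\,T(R'')=:T(R''')$: since $c(R_0,t)$ is algebraic and $c(R'',t)$ is rational, $c(R''',t)$ is algebraic. Then apply the theorem again to $T(R)=T(R')\,T(R''')$, where now $c(R',t)$ is rational and $c(R''',t)$ is algebraic, whence $c(R,t)$ is algebraic. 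The boundary positions $i=0$ (where $T(R)=T(R_0)\,T(R_1)\cdots T(R_k)$) and $i=k$ (where $T(R)=T(R_1)\cdots T(R_k)\,T(R_0)$) need only a single application of the theorem, the missing group being absent, and are handled the same way.

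The argument presents no genuine obstacle; the care required is essentially clerical. One must place each algebra in the correct list---in particular reading rationality of $c((f_5)^T,t)$ off its exponential form via Lemma~\ref{rational generating function} rather than from a closed formula, and deducing algebraicity of $(s_4)^T$, $(h)^T$ and the centre-by-metabelian T-ideal from algebraicity of $c(M_2(K),t)$ or $c(E\otimes_KE,t)$ modulo rational summands. One must also treat the degenerate brackets $i=0$ and $i=k$ so that a single statement covers every position of the algebraic factor. The lone structural point to keep in mind, standard but essential, is that each partial product of T-ideals is itself the T-ideal of a PI-algebra, which is precisely what legitimises the repeated invocation of Theorem~\ref{rational and algebraic codimension series}.
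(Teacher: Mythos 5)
Your proof is correct and is exactly the paper's intended argument: the paper gives no separate proof, declaring the corollary an immediate consequence of Theorem~\ref{rational and algebraic codimension series} and Examples~\ref{known codimension sequences}--\ref{polynomial codimension}, which is precisely the induction-plus-bookkeeping you spell out. One remark: your application of the theorem to $T(R_0)T(R'')$ (algebraic factor first, rational factor second) reverses the order of the hypotheses as literally stated, but this is harmless because the formula underlying the theorem, $c(R,t)=c(R_1,t)+c(R_2,t)+\left(\frac{1}{1-t}-1\right)\ast_{LR}c(R_1,t)\ast_{LR}c(R_2,t)$, is symmetric in the two factors---a symmetry the corollary itself (allowing the algebraic factor in any position $i=0,1,\ldots,k$) already presupposes.
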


Partial cases of the corollary follow from other results, e.g. Drensky \cite{D1} and Petrogradsky \cite{P2}
for $U_k(K)$ and $U_k(E)$, Stoyanova-Venkova \cite{SV2} and Centrone \cite{C} for the T-ideal generated
by $[x_1,x_2,x_3][x_4,x_5]$.

Completing the conjecture of Regev that for even $k\geq 4$ the codimension series $c(M_k(K),t)$ is not algebraic, we have the following:

\begin{conjecture}
Let $R$ be a PI-algebra with $T(R)=T(R_1)\cdots T(R_k)$, where $R_i=K,E,M_2(K),E\otimes_KE$ and at least two of the algebras $R_i$
are equal to $M_2(K)$ or $E\otimes_KE$. Then the codimension series $c(R,t)$ is not algebraic.
\end{conjecture}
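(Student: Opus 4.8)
The plan is to attack the conjecture through the asymptotics of the codimension sequence $\{c_n(R)\}$ together with the theorem of Jungen \cite{J}: if a power series is algebraic over $\mathbb{C}(t)$, then the asymptotic expansion of its coefficients contains only terms $C\gamma^n n^{r}$ with $r\in\mathbb{Q}\setminus\{-1,-2,-3,\ldots\}$; equivalently, a sequence whose leading asymptotics is $C n^{s}\gamma^n$ with $s$ a negative integer cannot be the coefficient sequence of an algebraic function. This is exactly the mechanism used by Beckner and Regev \cite{BcR} for $M_k(K)$ with $k$ odd. Accordingly, I would first record the leading asymptotics of the four building blocks: $c_n(K)\sim 1\cdot n^{0}\cdot 1^{n}$, $c_n(E)\sim\tfrac12\,n^{0}2^{n}$, and, from the explicit formulas in Examples \ref{2x2 matrices} and \ref{E tensor E}, $c_n(M_2(K))\sim A\,n^{-3/2}4^{n}$ and $c_n(E\otimes_K E)\sim B\,n^{-1/2}4^{n}$, the Catalan and central-binomial terms dominating the rational remainders. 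Thus to each factor I attach a pair $(\text{base},\text{power})$, namely $(1,0)$, $(2,0)$, $(4,-3/2)$, $(4,-1/2)$.

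The heart of the argument is to track how this pair transforms under $\ast_{LR}$ and under the product formula
\[
c(R,t)=c(R_1,t)+c(R_2,t)+\left(\frac{1}{1-t}-1\right)\ast_{LR}c(R_1,t)\ast_{LR}c(R_2,t).
\]
I would prove a \emph{power-addition lemma}: if $a_n\sim A\,n^{\sigma}\delta_a^{\,n}$ and $b_n\sim B\,n^{\tau}\delta_b^{\,n}$ with $\delta_a,\delta_b>0$, then $(a\ast_{LR}b)_n=\sum_k\binom{n}{k}a_kb_{n-k}\sim C\,n^{\sigma+\tau}(\delta_a+\delta_b)^{n}$. Since $\sum_k\binom{n}{k}\delta_a^{k}\delta_b^{n-k}=(\delta_a+\delta_b)^{n}$ and the binomial weight is concentrated, by a Laplace/saddle-point estimate, at the interior index $k^{\ast}\approx\frac{\delta_a}{\delta_a+\delta_b}\,n$, one may pull the slowly varying factor $k^{\sigma}(n-k)^{\tau}\approx(k^{\ast})^{\sigma}(n-k^{\ast})^{\tau}\propto n^{\sigma+\tau}$ out of the sum; this estimate is valid for all real $\sigma,\tau$, negative integers included. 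The same computation shows that convolving with $\frac{1}{1-t}-1$ (base $1$, power $0$) raises the base by $1$ and leaves the power unchanged. Conceptually the Borel--Laplace identity $c(t)=\int_0^{\infty}e^{-u}\widetilde{c}(tu)\,du$ makes this transparent: the growth $\widetilde{a}(s)\widetilde{b}(s)\sim C s^{\sigma+\tau}e^{(\delta_a+\delta_b)s}$ on the real axis (from $\sum n^{\sigma}x^{n}/n!\sim x^{\sigma}e^{x}$) produces a singularity of $c(t)$ at $t=1/(\delta_a+\delta_b)$, which for $\sigma+\tau>-1$ is of pure power type $(1-(\delta_a+\delta_b)t)^{-\sigma-\tau-1}$ and for $\sigma+\tau$ a negative integer acquires a logarithm, the polylogarithmic feature no algebraic function can have.

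Iterating over $T(R)=T(R_1)\cdots T(R_k)$, using associativity of $\ast_{LR}$ and of the $T$-ideal product and checking that the fully convolved term dominates all partial ones (every base is $\ge 1$, so each maximum is interior and no cancellation occurs), I obtain
\[
c_n(R)\sim C\,n^{\,s}\,d^{\,n},\qquad s=\sum_{i=1}^{k}s_i,\qquad d=\sum_{i=1}^{k}d_i+(k-1),\qquad C>0.
\]
Writing $p$ and $q$ for the numbers of factors equal to $M_2(K)$ and to $E\otimes_K E$, and using that $K,E$ contribute $0$, this gives $s=-\tfrac12(3p+q)$. When $p+q$ is even, $s$ is a negative integer, and Jungen's theorem forbids $c(R,t)$ from being algebraic. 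In particular this settles the minimal instances of the conjecture, where exactly two of the $R_i$ are special: there $(p,q)\in\{(2,0),(1,1),(0,2)\}$ give $s\in\{-3,-2,-1\}$, all negative integers, so $c(R,t)$ is not algebraic; the same holds whenever the number of special factors is even. (Consistently, a single special factor gives a half-integer $s$ and, by Theorem \ref{rational and algebraic codimension series}, an algebraic series.)

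The main obstacle is the case $p+q$ odd, where $s$ is a negative half-integer, a value Jungen's theorem permits for algebraic functions. This is exactly the phenomenon that leaves Regev's conjecture for $M_k(K)$ with $k$ even open, since there $(1-k^{2})/2$ is a half-integer, so I do not expect the leading coefficient asymptotics alone to suffice. To break this parity barrier I would go beyond the leading term: either analyse the full Puiseux/transseries expansion at the branch point $t=1/d$ in search of a forbidden sub-term, or exploit the differential-algebraic description of the building blocks, namely the modified Bessel representations of $\widetilde{c}(M_2(K),t)$ and $\widetilde{c}(E\otimes_K E,t)$ in Examples \ref{2x2 matrices}--\ref{E tensor E}, to show directly that $c(R,t)$ satisfies no polynomial equation over $\mathbb{C}(t)$. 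I expect this monodromy/differential-equation analysis, rather than the coefficient asymptotics, to be the decisive and difficult ingredient for the odd cases.
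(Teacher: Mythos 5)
First, a point of framing: the statement you are proving is a \emph{Conjecture} in the paper --- the authors offer no proof of it, so there is nothing to compare your attempt against, and it must stand on its own. By your own account it does not: you settle only the cases where the leading power $s$ is a negative integer and explicitly defer the remaining cases (an odd number of special factors) to a speculative monodromy/differential-equation analysis. So at best this is a partial result. Your two core tools are in themselves sound --- the binomial-convolution (``power-addition'') lemma is correct and provable by exactly the concentration argument you sketch, and Jungen's theorem \cite{J} as used by Beckner and Regev \cite{BcR} is the right non-algebraicity criterion. The serious problem lies elsewhere.

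The product formula you start from, $\widetilde{c}(R,t)=\widetilde{c}(R_1,t)+\widetilde{c}(R_2,t)+(e^t-1)\widetilde{c}(R_1,t)\widetilde{c}(R_2,t)$, taken verbatim from the paper, is wrong: the kernel should be $t-1$, not $e^t-1$. Indeed, taking degrees in the Berele--Regev cocharacter formula \cite{BR1} quoted in the same introduction, the term $\chi_{(1)}\widehat\otimes\sum_{j}\chi_j(R_1)\widehat\otimes\chi_{n-1-j}(R_2)$ has degree $\sum_j\frac{n!}{j!\,(n-1-j)!}c_j(R_1)c_{n-1-j}(R_2)$, whose exponential generating function is $t\,\widetilde{c}(R_1,t)\widetilde{c}(R_2,t)$, so the formula reads $\widetilde{c}(R,t)=\widetilde{c}(R_1,t)+\widetilde{c}(R_2,t)+(t-1)\widetilde{c}(R_1,t)\widetilde{c}(R_2,t)$. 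A sanity check confirms this: for $T(R)=T(K)T(K)=T(U_2(K))$ the $(e^t-1)$ version yields $c_2(R)=2+3^2-2^2=7>2=\dim P_2$, which is absurd, while the $(t-1)$ version reproduces the classical $c_n(U_2(K))=2^{n-1}(n-2)+2$. This correction changes your bookkeeping materially: since $((t-1)\ast_{LR}b)_n=n\,b_{n-1}-b_n\sim (C/d)\,n^{s+1}d^{\,n}$, the kernel leaves the exponential base unchanged and raises the polynomial power by one, so for $T(R)=T(R_1)\cdots T(R_k)$ one gets $d=\sum d_i$ (consistent with the additivity of the exponent for products of verbally prime T-ideals \cite{GZ}, and inconsistent with your $d=\sum d_i+(k-1)$) and $s=\sum s_i+(k-1)$, not $s=\sum s_i$. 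With the corrected values, $T(M_2(K))^2$ gives $s=-2$ and $T(M_2(K))T(E\otimes_KE)$ gives $s=-1$, so these two cases are genuinely settled by your method; but $T(E\otimes_KE)^2$ gives $s=0$, where Jungen's criterion is silent, so one of your three ``minimal instances'' is \emph{not} settled, and every additional factor $K$ or $E$ adds $+1$ to $s$, so for example $T(K)T(K)T(M_2(K))^2$ (with $s=0$) escapes as well. In short, after correcting the formula, leading-order asymptotics plus Jungen prove non-algebraicity exactly when $3p+q$ is even and $3p+q\geq 2k$; all other configurations --- including the basic case $T(E\otimes_KE)^2$ and all the odd cases you already flagged --- remain open, so the conjecture is not proved.
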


\section*{Acknowledgements}

The authors are very grateful to Amitai Regev for the stimulating discussions.

\end{document}